\theoremstyle{plain}
\newtheorem{theorem}{Theorem}
\newtheorem{proposition}[theorem]{Proposition}
\newtheorem{corollary}[theorem]{Corollary}
\newtheorem{theoremX}{Main Theorem}
\theoremstyle{definition}
\newtheorem{remark}[theorem]{Remark}
\newcommand{\secref}[1]{Section~\ref{#1}}
\newcommand{\propref}[1]{Proposition~\ref{#1}}
\def\ad{{\rm ad}}
\def\inf{{\rm Inf}}
\def\im{{\rm Im}\,}
\def\L{\mathbb L}
\def\Z{\mathbb Z}
\def\Q{\mathbb Q}
\begin{document}

\title{Lawrence-Sullivan Models for the Interval}

\author{Paul-Eug\`ene Parent}
\address{D\'epartement de Math\'ematiques\\
        Universit\'e d'Ottawa\\
         585 King Edward\\
          Ottawa, ON\\
          K1N 6N5\\
         Canada}
\email{pparent@uottawa.ca}

\author{Daniel Tanr\'e}
\address{D\'epartement de Math\'ematiques\\
         UMR 8524\\
         Universit\'e des Sciences et Technologies de Lille\\
         59655 Villeneuve d'Ascq Cedex\\
         France}
\email{Daniel.Tanre@univ-lille1.fr}
\subjclass[2000]{55P62-17B70.}

\date{\today}

\begin{abstract}
Two constructions of a Lie model of the interval were performed by R. Lawrence and D. Sullivan. The first model uses an inductive process and the second one comes directly from solving a differential equation. They conjectured that these two models are the same. We prove this conjecture here.
\end{abstract}

\maketitle

This work is concerned with Lie models of the interval.
Throughout this paper we assume that the base field is the field of rational numbers. A {\it graded Lie algebra} consists of a $\Z$-graded vector space $L$, together with a bilinear product called the {\it Lie bracket} that we denote $[-,-]$, such that
$
[x,y]=-(-1)^{\vert x\vert\vert y\vert}[y,x]
$
and
$$
(-1)^{|x|\,|z]}[x,[y,z]]+(-1)^{|y|\,|x]}[y,[z,x]]+(-1)^{|z]\,|y]}[z,[x,y]]=0,$$
for all homogeneous $x,y,z\in L$, where $\vert \alpha\vert$ refers to the degree of a homogeneous element $\alpha\in L$.
If a graded Lie algebra is endowed with a derivation $\partial$ of degree $-1$ such that $\partial^2=0$, we call $(L,\partial)$ a {\it differential graded Lie algebra}, abbreviated {\it dgL}, and $\partial$ is its {\it differential}.

Let $V$ be a $\Z$-graded vector space, and let $TV$ denote the tensor algebra on $V$. When endowed with the commutator bracket, $TV$ becomes a graded Lie algebra. The {\it free Lie algebra generated by} $V$, denoted $\L V$, is the smallest sub Lie algebra of $TV$ containing $V$. An element in $\L V$ has {\it bracket length} $k$ if it is a linear combination of iterated brackets of $k$ elements of $V$, i.e., if it belongs to the intersection $\mathbb LV\cap T^kV$, where $T^kV$ denotes the subspace of $TV$ generated by the words of tensor length $k$.
The subspace of elements of bracket length $k$ is denoted $\mathbb L^kV$.  If $({\L}V,\partial)$ is a dgL,we denote by $\partial_k$ the derivation induced by the composition of $\partial$ with the projection
${\L}V\to{\L}^kV$.
We denote by $\widehat{\L}V$ the completed Lie algebra, whose elements are formal series of elements of $\L V$.

Let $X$ be a CW-complex with cells $(e_{\alpha})$ such that their closure $(\overline{e}_{\alpha})$ has the rational homology of a point. Denote by $V$ the rational vector space span by the desuspended cells, i.e., each cell $e_{\alpha}$ generates a component $\Q$ of $V$ in degree $|e_{\alpha}|-1$.
In an appendix to \cite{MR2308943}, D. Sullivan constructs a completed differential Lie algebra $(\widehat{\L}V,\partial)$, with $\partial=\partial_0+\partial_1+\partial_{\geq 2}$ such that $\partial_0\colon V\to V$ is the boundary operator of cells,
$\partial_1\colon V\to \widehat{\L}^2V$ comes from a cellular approximation of the diagonal and $\partial_{\geq 2}(V)\subset \widehat{\L}^{>2}V$. In the case of the interval $I$, with two 0-cells and one 1-cell, this model is of the shape
$(\widehat{\L}(a,b,x),\partial)$, with $|a|=|b|=-1$, $|x|=0$ and $\partial_0x=b-a$. More details are given in \secref{sec:sullivaninductive}.
We call this model \emph{the inductive model of the interval.}

In \cite{LrSd}, R. Lawrence and D. Sullivan prove the existence of a completed differential Lie algebra $(\widehat{\L}(a,b,x),\partial)$, such that
$\partial a=-(1/2)[a,a]$,
$\partial b= -(1/2)[b,b]$ and
$$\partial x=\ad_x(b)+\sum_{i=0}^{\infty}\frac{B_i}{i!}(\ad_x)^i(b-a),$$
where the $B_i$ are the Bernoulli numbers. This construction comes from an analysis of the flow generated by $x$ which moves from $a$ to $b$ in unit time, with $a$ and $b$ being flat. We call it \emph{the geometric model}.

In the two papers, \cite{MR2308943} and \cite{LrSd}, it is conjectured that these two models are the same. We prove this conjecture here.

\begin{theoremX}
The inductive and the geometric models of the interval are the same.
\end{theoremX}

The proof consists in two preliminary steps. In  \secref{sec:sullivaninductive}, we revisite the inductive construction taking into account the particular case of the interval. A second ingredient comes from the study of some derivations of ${\L}(x,\beta)$, $|x|=0$ and $|\beta|=-1$, done in \secref{sec:derivations}. Finally, \secref{sec:maintheorem} contains the proof of the conjecture using, among other tools,  the Euler formula which characterizes the Bernoulli numbers. As a bonus, our proof generates other relations between the Bernoulli numbers. In consideration of the litterature on the subject, they are certainly well-known but we do not have any reference for them.

Finally, one may observe as in \cite{LrSd} that this differential (completed) Lie algebra is similar to the Quillen's model, introduced in \cite{MR0258031} for the study of the rational homotopy types of CW-complexes with one 0-cell and no 1-cell, in contrast with the Sullivan approach (\cite{MR0646078})  which authorizes nilpotent spaces of finite type. Through the dictionary between infinity cocommutative coassociative coalgebra structure and differential of a free Lie algebra, this model is an explicit construction of the infinity cocommutative coassociative coalgebra structure on the rational chains of the interval.

\section{Sullivan's inductive construction}\label{sec:sullivaninductive}

 In this section, we adapt Sullivan's proof of \cite{MR2308943} to the particular case of the interval, and prove the next property.

\begin{proposition}\label{prop:inductive}
The Sullivan's inductive model $(\widehat{\L}(a,b,x),\partial)$ of the interval admits a differential  of the form
$
\partial a\,=\,-\frac{1}{2}[a,a]$, $\partial b\,=\,-\frac{1}{2}[b,b]
$
and
$$
\partial x\,=\,ad_x(b)\,+\,\sum_{i=0}^{\infty}\,\frac{\lambda_i}{i!}ad_x^i(\beta),
$$
with $\beta=b-a$, $\lambda_i\in\mathbb Q$ and $\lambda_{2k+1}=0$, $k\geq1$.
\end{proposition}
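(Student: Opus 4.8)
The plan is to reconstruct Sullivan's inductive construction explicitly in the case of the interval, building the differential on $(\widehat{\L}(a,b,x),\partial)$ degree by degree in bracket length. First I would set $\partial a=-\frac{1}{2}[a,a]$ and $\partial b=-\frac{1}{2}[b,b]$, since these are the standard Maurer-Cartan-type expressions making $a$ and $b$ flat (verifying $\partial^2a=0$ reduces to the Jacobi identity). The substantive work is with $\partial x$, where $|x|=0$. The condition $\partial_0 x=b-a=\beta$ is imposed by the cellular boundary. I would then inductively determine the higher bracket-length components $\partial_k x\in\widehat{\L}^kV$ by solving $\partial^2x=0$ one bracket length at a time, exactly as in the general Sullivan inductive scheme, and show that at each stage the obstruction lives in a space where it can be cancelled by an appropriate choice of $\partial_k x$.

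The key structural observation I would isolate is that, because $\partial a$ and $\partial b$ are quadratic and $\partial_0x=\beta$, the higher terms of $\partial x$ can be taken to lie in the sub-Lie-algebra generated by $x$ and $\beta$. This is what licenses the comparison with \secref{sec:derivations}, where derivations of $\L(x,\beta)$ with $|x|=0$, $|\beta|=-1$ are analyzed. Concretely, I expect $\partial x$ to take the form $\ad_x(b)+\sum_i \frac{\lambda_i}{i!}\ad_x^i(\beta)$ once one recognizes that the iterated brackets $\ad_x^i(\beta)$ form a natural basis for the relevant homogeneous pieces in degree $-1$. The coefficients $\lambda_i$ are then pinned down recursively by the equation $\partial^2x=0$, which at each bracket length gives a linear relation expressing $\lambda_i$ in terms of the lower $\lambda_j$.

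The parity statement $\lambda_{2k+1}=0$ for $k\geq 1$ is the most delicate point and I would treat it via a symmetry argument. The interval carries an involution exchanging its two endpoints, which algebraically should correspond to a symmetry of the model swapping $a\leftrightarrow b$ (hence $\beta\mapsto-\beta$) and sending $x\mapsto -x$ together with a sign-reversal of the flow direction. Tracking how $\ad_x^i(\beta)$ transforms under this involution, one finds that the terms with odd $i\geq 3$ must vanish for the differential to be invariant; the $i=1$ term survives because it combines with the $\ad_x(b)$ summand. I expect the main obstacle to be making this symmetry precise at the level of the inductively constructed model — that is, showing that the inductive freedom in choosing $\partial_k x$ can always be exercised compatibly with the involution, so that the symmetric (parity-respecting) solution is genuinely attainable rather than merely one gauge among many. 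Once the involution is shown to be a strict automorphism of the constructed dgL, the vanishing of the odd coefficients follows, and the proposition is complete.
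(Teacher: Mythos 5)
Your overall inductive scheme coincides with the paper's: both build $\partial_{\leq n}$ bracket length by bracket length, use the Jacobi identity for derivations to see that the obstruction $\sum_{i+j=n+1}\partial_i\circ\partial_j(x)$ is a $\partial_0$-cycle, invoke acyclicity of $(\L(x,\beta),\partial_0)$ to write it as $\partial_0\gamma$, and use the fact that the degree $-1$ part of $\L^{n+2}(x,\beta)$ is the line spanned by $\ad_x^{n+1}(\beta)$ to force $\partial_{n+1}x=\frac{\lambda_{n+1}}{(n+1)!}\ad_x^{n+1}(\beta)$. Where you genuinely diverge is the parity claim. The paper proves $\lambda_{2k+1}=0$ computationally: it shows $\partial_1\vert_{\L(x,\beta)}=-\frac12\ad_{a+b}$ and $\partial_k\circ\partial_1=\frac12\ad_{a+b}\circ\partial_k$, hence $\partial_1\circ\partial_k+\partial_k\circ\partial_1=0$ for $k\geq2$; since for $n$ even every remaining pair $(i,j)$ with $i+j=n+1$ and $i,j\geq2$ contains an odd index $\geq3$ whose $\lambda$ already vanishes, the whole obstruction at bracket length $n+2$ is zero and one takes $\lambda_{n+1}=0$. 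Your involution $\sigma\colon a\leftrightarrow b$, $x\mapsto -x$ (so $\ad_x^i(\beta)\mapsto(-1)^{i+1}\ad_x^i(\beta)$) is a viable alternative, and the obstacle you flag does close: $\sigma$ commutes with $\partial_{\leq n}$ precisely when $\lambda_1=-\frac12$ and the odd $\lambda_i$ already chosen vanish; then the obstruction $\omega$ satisfies $\sigma\omega=-\omega$, while any $\gamma$ in $\Q\,\ad_x^{n+1}(\beta)$ satisfies $\sigma\gamma=(-1)^n\gamma$, so applying $\sigma$ to $\partial_0\gamma=\omega$ yields $(-1)^n\omega=-\omega$ and hence $\omega=0$ for $n$ even. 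Your route is conceptually cleaner and explains geometrically why only even-index coefficients survive; the paper's computation is more elementary and its identity $\partial_1\circ\partial_k+\partial_k\circ\partial_1=0$ is reused later to isolate the Euler relation. One caveat: even in your version the $\ad_{a+b}$ computation is hard to avoid, because the terms $\partial_1\partial_n(x)$ and $\partial_n\partial_1(x)$ individually involve $a+b$ and only their sum lies in $\L(x,\beta)$ (indeed vanishes); without this cancellation the obstruction would not sit in the acyclic subcomplex $\L(x,\beta)$, where the homology of $(\L(a,b,x),\partial_0)$ argument and the one-dimensionality of the target are applied, so you should make that step explicit.
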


\begin{proof}
 We proceed by induction, supposing that a derivation $\partial_{\leq n}$ on $\mathbb L(a,b,x)$ has been constructed such that
$$
\partial_{\leq n} a\,=\,-\frac{1}{2}[a,a]\,,\;
\partial_{\leq n} b\,=\,-\frac{1}{2}[b,b]\,,\;
\partial_{\leq n} x\,=\,ad_x(b)\,+\,\sum_{i=0}^{n}\frac{\lambda_i}{i!}ad_x^i(\beta),
$$
 with $\lambda_i\in\mathbb Q$  satisfying
$$
\im \partial_{\leq n}^2\subset\mathbb L^{\geq n+2}(x,\beta), \text{ for } n\geq 1,
$$
and
$$
\im \partial_{n}\subset \L(x,\beta), \text{ for } n\geq 2.$$
The induction starts at $n=1$. We first verify that case. Consider the derivations $\partial_0$ and $\partial_1$ on $\L(a,b,x)$, given by
$
\partial_0a=\partial_0b=0$, $\partial_0x=\beta,
$
and
$$
\partial_1a=-\frac{1}{2}[a,a]\,,\;\partial_1b=-\frac{1}{2}[b,b]\,,\;\partial_1x=\frac{1}{2}ad_x(b+a).
$$
Clearly $\partial_0^2=\partial_0\partial_1+\partial_1\partial_0=0$
and we proceed with the computation of $\partial_1^2$. The Jacobi identity implies that triple brackets of the form $[\alpha,[\alpha,\alpha]]$ are all zero. Hence $\partial_1^2a=\partial_1^2b=0$. In contrast, $\partial_1^2x\neq0$. A computation using the Jacobi identity shows that
$$\partial_1^2x\,=\,\frac{1}{2}\,\partial_1(ad_x(b+a))\,=\,-\frac{1}{8}ad_x[\beta,\beta]\in \L^3(x,\beta).$$
Suppose now that $\partial_{\leq n}$ has been constructed as before.
The set of derivations on a Lie algebra carries a natural Lie structure. Hence, we must have
\begin{eqnarray}\label{triple}
[\partial_{\leq n},[\partial_{\leq n},\partial_{\leq n}]]=0.
\end{eqnarray}
Moreover, since $\vert\partial_{\leq n}\vert=-1$, we have $[\partial_{\leq n},\partial_{\leq n}]=2\,\partial_{\leq n}^2$ and  $[\partial_{\leq n},\partial_{\leq n}](x)\in\mathbb L^{\geq n+2}(x,\beta)$ by the induction hypothesis. Hence the component in $\mathbb L^{n+2}(x,\beta)$ of the triple bracket (\ref{triple}) evaluated at $x$  is given by
$$
0=[\partial_0,\sum_{i+j=n+1}\partial_i\circ \partial_j](x)=\partial_0(\sum_{i+j=n+1}\partial_i\circ\partial_j)(x),
$$
The element $\displaystyle \sum_{i+j=n+1}\Big(\partial_i\circ \partial_j\Big)(x)\in\mathbb L^{n+2}(x,\beta)$ being a $\partial_0$-cycle of total degree $-2$ and the dgL $(\mathbb L(x,\beta),\partial_0)$ being acyclic, one can find an element $\gamma\in\mathbb L^{n+2}(x,\beta)$ of total degree $-1$ such that
$$
\partial_0\gamma=\sum_{i+j=n+1}\Big(\partial_i\circ\partial_j\Big)(x).
$$
The subspace of $\mathbb L^{n+2}(x,\beta)$ in total degree $-1$ being generated by $ad_x^{n+1}(\beta)$, $\gamma$ must be some (rational) multiple $\eta$ of that element. Hence we can extend $\partial_{\leq n}(x)$ with $$\partial_{n+1}(x)=\frac{\lambda_{n+1}}{(n+1)!}\ad_x^{n+1}(\beta)$$ by choosing $\lambda_{n+1}=-(n+1)!\cdot\eta$. By construction one has the inclusion
$$
\im\partial_{\leq n+1}^2\subset\mathbb L^{\geq n+3}(x,\beta),\;{\text{ and }}\,
\im\partial_{n+1}\subset \L(x,\beta).
$$
This completes the inductive step. Let us show that one can choose $\lambda_{2k+1}=0$ if $k\geq 1$.
First, notice that the restriction of $\partial_1$ to $\mathbb L(x,\beta)$ satisfies
$$
\partial_1\vert_{\mathbb L(x,\beta)}=-\frac{1}{2}ad_{a+b}.
$$
By definition, it is true on $x$ while on $\beta$ we have
$$
\partial_1\beta=-\frac{1}{2}[b,b]+\frac{1}{2}[a,a]=-\frac{1}{2}[b+a,b-a]=-\frac{1}{2}ad_{a+b}\beta.
$$
Hence for all $k$, $1<k$, we have
$$
\partial_1\circ\partial_k=-\frac{1}{2}ad_{a+b}\circ\partial_k,
$$
since by hypothesis $\partial_k(\mathbb L(a,b,x))\subset\mathbb L(x,\beta)$.
Moreover, we have
$
\partial_k\circ\partial_1(a)=\partial_k\circ\partial_1(b)=0
$
and
$
\partial_k\circ\partial_1(x)=-\frac{1}{2}\partial_kad_{a+b}(x)=\frac{1}{2}ad_{a+b}\partial_k(x)
$,
i.e.,
$$
\partial_k\circ\partial_1=\frac{1}{2}ad_{a+b}\circ \partial_k.
$$
Suppose that $\lambda_{2k+1}=0$ for $1<2k+1<n$ with $n$ even. We can identify the $(n+2)$-bracket length of $\partial_{\leq n}^2(x)$ with
$(\partial_1\circ\partial_n+\partial_n\circ\partial_1)(x)$, which is trivial as we have seen.
Hence,  we can extend $\partial_{\leq n}$ to $\partial_{\leq n+1}$ by choosing $\lambda_{n+1}=0$. This ends the proof.
\end{proof}

\section{Derivations of $\mathbb L(x,\beta)$}\label{sec:derivations}

Consider a graded Lie algebra $L=\mathbb L(a,b,x)$ generated by two elements $a$ and $b$ of degree $-1$, and one element $x$ of degree $0$.
Let $\beta=b-a$ and $\gamma\in\L(x,\beta)$ be an element of degree -1.
We set
 $$\mu_{n,k}(\gamma)=\ad_x^{n-2k}([\ad_x^k(\gamma),\ad_x^k(\gamma)])\in\L^{n+2}(x, \beta).$$
The elements $\mu_{n,k}(\beta)$ of $L$ play a crucial role in the inductive model.
To give a more explicit expression of this model, we study the composition of some adjoint representations in ${\L}(x,\beta)$. Let $v_{n,k}$ be the rational numbers defined by
\begin{eqnarray*}
v_{n,k}&=&v_{n-1,k}-v_{n-2,k-1},\\
v_{0,0}&=&1,\;v_{1,0}=1/2,\\
v_{i,j}&=&0\text{ if }j<0 \text{ or if }j>[i/2].
\end{eqnarray*}

\begin{proposition}\label{prop:lacle}
Let $\gamma\in\L(x,\beta)$ be an element of degree -1.
For any $p\geq 0$ and $q\geq 0$, we have
$$\ad_{\ad_x^p(\gamma)}\circ \ad_x^q(\gamma)=\sum_kv_{|p-q|,k}\,\mu_{p+q,\inf(p,q)+k}(\gamma).$$
\end{proposition}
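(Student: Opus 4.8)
The plan is to abbreviate the left-hand side by $A_{p,q}=\ad_{\ad_x^p(\gamma)}\circ\ad_x^q(\gamma)=[\ad_x^p(\gamma),\ad_x^q(\gamma)]$ and to prove the identity by induction on the difference $d=|p-q|$. Two elementary structural facts drive everything. First, since $|x|=0$ the operator $\ad_x$ is a derivation of degree $0$, so Leibniz gives
$$\ad_x A_{p,q}=A_{p+1,q}+A_{p,q+1}.$$
Second, because $\ad_x^p(\gamma)$ and $\ad_x^q(\gamma)$ both have odd degree $-1$, the graded antisymmetry of the bracket makes $A_{p,q}=A_{q,p}$ symmetric. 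The right-hand side $\sum_k v_{|p-q|,k}\,\mu_{p+q,\inf(p,q)+k}(\gamma)$ depends only on $|p-q|$, $p+q$ and $\inf(p,q)$, hence is also symmetric, so it suffices to treat the case $p\geq q$. Writing $d=p-q$, one has $\ad_x^{d-2k}(A_{q+k,q+k})=\mu_{p+q,q+k}(\gamma)$, and the claim becomes
$$A_{p,q}=\sum_k v_{d,k}\,\ad_x^{d-2k}(A_{q+k,q+k}).$$

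For the base cases I would record $d=0$, where the statement reads $A_{q,q}=v_{0,0}A_{q,q}=A_{q,q}$, and $d=1$, where Leibniz together with symmetry yields $\ad_x A_{q,q}=A_{q+1,q}+A_{q,q+1}=2A_{q+1,q}$, that is $A_{q+1,q}=\tfrac12\ad_x A_{q,q}$, matching $v_{1,0}=1/2$.

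For the inductive step with $d\geq 2$ I would compute $A_{p+1,q}$ (whose difference is $d$, with $p-q=d-1$) from $A_{p+1,q}=\ad_x A_{p,q}-A_{p,q+1}$. The induction hypothesis applies to $A_{p,q}$ (difference $d-1$) and to $A_{p,q+1}$ (difference $d-2$, since $p\geq q+1$). Applying $\ad_x$ to the first expansion raises each power of $\ad_x$ by one, turning $\ad_x^{(d-1)-2k}$ into $\ad_x^{d-2k}$; reindexing the second expansion by $j=k+1$ turns $v_{d-2,k}\,\ad_x^{(d-2)-2k}(A_{q+1+k,q+1+k})$ into $v_{d-2,j-1}\,\ad_x^{d-2j}(A_{q+j,q+j})$. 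Subtracting collects the coefficient $v_{d-1,k}-v_{d-2,k-1}$ in front of $\ad_x^{d-2k}(A_{q+k,q+k})$, which is exactly $v_{d,k}$ by the defining recurrence, and reassembling $\ad_x^{d-2k}(A_{q+k,q+k})=\mu_{(p+1)+q,\,q+k}(\gamma)$ finishes the step.

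The whole argument is essentially forced; the only real obstacle is bookkeeping, namely aligning the shift in the power of $\ad_x$ produced by Leibniz with the index shift $k\mapsto k-1$ in the recurrence for $v_{n,k}$, and checking that the boundary conventions $v_{i,j}=0$ for $j<0$ or $j>[i/2]$ keep the sums over the correct range (in particular that the difference-$(d-2)$ term never reintroduces indices outside $0\leq k\leq[d/2]$). Once the two recurrences are matched, the proposition follows.
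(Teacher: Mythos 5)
Your proof is correct. The identity $\ad_x A_{p,q}=A_{p+1,q}+A_{p,q+1}$ that drives your induction is exactly the Jacobi/Leibniz computation the paper also uses, and the index shift $k\mapsto k-1$ you produce by reindexing the difference-$(d-2)$ term matches the defining recurrence $v_{d,k}=v_{d-1,k}-v_{d-2,k-1}$ just as it must; the base cases and the boundary conventions on $v_{i,j}$ are handled correctly. Where you genuinely diverge from the paper is in the organization of the induction. The paper first proves the case $p=0$ by induction on $q$, where the ``shifted'' term $\ad_{[x,\gamma]}\circ\ad_x^{n-2}([x,\gamma])$ is absorbed by re-applying the $p=0$ case to the new generator $[x,\gamma]$; it then obtains general $(p,q)$ by substituting $\ad_x^{j}(\gamma)$ for $\gamma$ and using the identification $\mu_{n,k}(\ad_x^{j}\gamma)=\mu_{n+2j,k+j}(\gamma)$, split into the two subcases $p\le q$ and $p\ge q$. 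You instead induct directly on the difference $d=|p-q|$ with $\gamma$ fixed throughout, using the symmetry $A_{p,q}=A_{q,p}$ (valid since both entries have odd degree) to reduce to $p\ge q$. This buys you a single uniform induction with no substitution step and no case split at the end, at the mild cost of a strong induction with two base cases; the paper's version, by contrast, isolates the combinatorial content in the $p=0$ case and treats the rest as formal bookkeeping. Both are complete; yours is arguably the cleaner write-up.
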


\begin{proof}
This property is clearly satisfied for $p=0$ and $q=0$. Suppose it is true for $p=0$ and $0\leq q\leq n-1$. The Jacobi identity and the induction hypothesis imply:
\begin{eqnarray*}
\ad_{\gamma}\circ \ad_x^n(\gamma)&=&
\ad_x([\ad_x^{n-1}(\gamma), \gamma])+[\ad_x^{n-1}(\gamma),[\gamma,x]]\\
&=& \ad_x\circ \ad_{\gamma}\circ \ad_x^{n-1}(\gamma)-\ad_{[x, \gamma]}\circ \ad_x^{n-2}([x, \gamma])\\
&=& \ad_x(\sum_kv_{n-1,k}\,\mu_{n-1,k}(\gamma))-\sum_kv_{n-2,k}\,\mu_{n-2,k}([x, \gamma])\\
&=&\sum_k\left(v_{n-1,k}-v_{n-2,k-1}\right)\,\mu_{n,k}(\gamma).
\end{eqnarray*}
Thus the formula is proved for $p=0$. We distinguish now the  two cases, $p\geq q$ and $q\geq p$. Let $i\geq 0$ and $j\geq 0$.
\begin{itemize}
\item If $p=j$, $q=i+j$, then from $$\ad_{\ad_x^j(\gamma)}\circ \ad_x^{j+i}(\gamma)= \ad_{\ad_x^j(\gamma)}\circ \ad_x^i(\ad_x^j(\gamma))
$$ and from our first step applied to $\ad_x^j(\gamma)$, we get
$$\ad_{\ad_x^j(\gamma)}\circ \ad_x^{j+i}(\gamma)=\sum_{k}v_{i,k}\,\mu_{i+2j,j+k}(\gamma).$$
\item If $p=i+j$ and $q=j$, then using Jacobi identity and the first step, we have
$$
\ad_{\ad_x^i(\gamma)}(\gamma)= [\ad_x^i(\gamma), \gamma]=\ad_{\gamma}\circ \ad_x^i(\gamma)=\sum_k v_{n,k}\,\mu_{n,k}(\gamma).$$
Now replacing $\gamma $ by $\ad_x^i(\gamma)$ to get
$$\ad_{\ad_x^{i+j}(\gamma)}\circ \ad_x^i(\gamma)=\sum_{k} v_{j,k}\,\mu_{j+2i,k+i}(\gamma).$$
\end{itemize}
\end{proof}

\begin{corollary}\label{cor:independance}
The elements $\mu_{n,k}(\beta)\in \L(x,\beta)$ are linearly independent.
\end{corollary}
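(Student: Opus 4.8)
The plan is to reduce the statement to a single bracket length and then to detect the independence inside the tensor algebra. First I would note that $\mu_{n,k}(\beta)\in\L^{n+2}(x,\beta)$, so elements carrying different values of $n$ lie in different components of the bracket-length grading. As these components are in direct sum, a vanishing linear combination splits according to $n$, and it suffices to prove that for each fixed $n$ the finite family $\{\mu_{n,k}(\beta):0\leq k\leq[n/2]\}$ is linearly independent.

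For a fixed $n$ I would work inside the tensor algebra $T(x,\beta)$: by definition $\L(x,\beta)$ is a sub Lie algebra of $T(x,\beta)$, so linear independence can be tested on the coefficients of the monomial basis. Since $|x|=0$ one has $\ad_x(w)=xw-wx$, whence $\ad_x^m(w)=\sum_l(-1)^l\binom{m}{l}x^{m-l}wx^l$ and in particular $\ad_x^k(\beta)=\sum_j(-1)^j\binom{k}{j}x^{k-j}\beta x^j$. Because $\ad_x^k(\beta)$ has odd degree, $[\ad_x^k(\beta),\ad_x^k(\beta)]=2(\ad_x^k(\beta))^2$ in $T(x,\beta)$, so that $\mu_{n,k}(\beta)=2\,\ad_x^{n-2k}\big((\ad_x^k\beta)^2\big)$ is a linear combination of the monomials $x^p\beta x^q\beta x^r$ with $p+q+r=n$.

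The decisive step is to single out the right coordinate slice, namely the monomials with no $x$ between the two copies of $\beta$, i.e.\ those with $q=0$. Imposing $q=0$ forces the binomial expansions above to collapse to a unique surviving term, and a short computation should give that the coefficient of $x^{n-r}\beta\beta x^r$ in $\mu_{n,k}(\beta)$ equals $2(-1)^r\binom{n-2k}{r-k}$. This coefficient is $0$ for $r<k$ and equals $2(-1)^k\neq 0$ for $r=k$. Arranging the $[n/2]+1$ vectors of these coefficients as the rows (indexed by $k$) of a matrix with columns indexed by $r\in\{0,\dots,n\}$, the leading block on the columns $r\in\{0,\dots,[n/2]\}$ is triangular with nonzero diagonal entries $2(-1)^k$. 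Hence the rows, and therefore the elements $\mu_{n,k}(\beta)$, are linearly independent.

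The main obstacle is not conceptual but lies precisely in choosing the coordinate slice on which the triangular pattern becomes visible: the coefficient of a general monomial $x^p\beta x^q\beta x^r$ is an unwieldy triple binomial sum, and it is only on the slice $q=0$ that it collapses to a single binomial coefficient with the clean echelon structure. Once that one coefficient formula is secured, the signs (in particular the factor $2$ coming from the odd-degree self-bracket) are routine, and no analysis of the monomials with $q>0$ is required.
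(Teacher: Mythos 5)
Your argument is correct, and I verified the key coefficient formula: writing $\ad_x^m(w)=\sum_l(-1)^l\binom{m}{l}x^{m-l}wx^l$ and $(\ad_x^k\beta)^2=\sum_{i,j}(-1)^{i+j}\binom{k}{j}\binom{k}{i}x^{k-j}\beta x^{j+k-i}\beta x^i$, the middle exponent vanishes only for $i=k$, $j=0$, and since left/right multiplication by $x$ preserves that exponent, the coefficient of $x^{n-r}\beta\beta x^r$ in $\mu_{n,k}(\beta)$ is indeed $2(-1)^r\binom{n-2k}{r-k}$, giving the triangular pattern you describe. However, your route is genuinely different from the paper's. The paper never opens up the tensor algebra: it invokes the elimination theorem for free Lie algebras (the kernel of $\L(V\oplus W)\to\L(V)$ is free on $T(V)\otimes W$), applies it twice to identify the subspace $L\langle n\rangle$ of brackets with $n$ letters $x$ and two letters $\beta$, counts its dimension to be $[n/2]+1$, and then uses \propref{prop:lacle} to see that the $[n/2]+1$ elements $\mu_{n,k}(\beta)$ span $L\langle n\rangle$, hence form a basis. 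That argument buys the stronger statement that the $\mu_{n,k}(\beta)$ \emph{span} $L\langle n\rangle$, which is what makes ``projecting onto $\mu_{n-1,k}(\beta)$'' natural in the later sections, but it leans on \propref{prop:lacle} and on a cited structure theorem. Your argument is more elementary and self-contained: it needs neither ingredient, and it produces explicit dual functionals (the coefficients of the monomials $x^{n-r}\beta\beta x^r$) that detect independence directly. Both are complete proofs of the stated corollary.
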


\begin{proof}
Recall first a well-known property concerning free Lie algebras (see \cite[Proposition VI.2.(7) Page 139]{MR764769} for instance).
Let $V$ and $W$ be rational vector spaces. Then the kernel of the canonical projection $\L(V\oplus W)\to \L(V)$ is the free Lie algebra on $T(V)\otimes W$, the canonical injection $\L(T(V)\otimes W)\to \L(V\oplus W)$ corresponding to adjunctions. More precisely, in our case, the kernel of the projection $\L(x,\beta)\to \L(x)$ is $\L(T(x)\otimes \beta)$ and the canonical inclusion
$j\colon \L(T(x)\otimes \beta)\to \L(x,\beta)$ is defined by $j(x^n\otimes \beta)=\ad_x^n(\beta)$.
We repeat this process: the kernel of the projection
$\L(T(x)\otimes\beta)\to \L(\beta)$
is $\L(T(\beta)\otimes T^+(x)\otimes\beta)$.

Let $n>0$ be fixed and $L \langle n \rangle$ be the vector subspace of $\L(x,\beta)$ formed of brackets with exactly $n$ letters $x$ and twice the letter $\beta$.
With the identification coming from the canonical inclusions,  $L \langle n \rangle$ is a subspace of $\L(T(\beta)\otimes T^+(x)\otimes\beta)$ spanned by the generator $\beta\otimes x^n\otimes \beta$ and the brackets $[x^i\otimes\beta,x^j\otimes \beta]$, with $i+j=n$.
If we impose $i\geq j$, these elements are a basis of  $L \langle n \rangle$.
Suppose that $n=2p$ for sake of simplicity, the argument being similar for $n$ odd.
The previous considerations show that $\L\langle n\rangle$ is of dimension $p+1$.
Now \propref{prop:lacle} implies that the $(p+1)$ elements  $\mu_{n,k}(\beta)$ span $\L\langle n\rangle$. Thus, they are a basis of $\L\langle n\rangle$.
\end{proof}

We denote by
$\theta_n$ the derivation of
${\L}(x,\beta)$, defined by
$
\theta_n(x)=ad_x^n(\beta)$ and $\theta_n(\beta)=0
$, for any $n\geq 0$.

\begin{corollary}\label{teta}
The image of the composition $\theta_p\circ\theta_q$ is contained in the linear span of the $\mu_{p+q-1,k}(\beta)$. More precisely,
if $p\geq q$, then we have
$$
\theta_p\circ\theta_q(x)=
\sum_k \left(\sum_{i=0}^{q-1} v_{p-i,k-i}\right)\,\mu_{p+q-1,k}(\beta),
$$
and when $p<q$, the formula becomes
$$
\theta_p\circ \theta_q(x)=\sum_k\left(\sum_{i=0}^{q-p-1}v_{q-p-1-i,k-p}+\sum_{i=1}^p v_{i,k-p+i}\right)\,\mu_{p+q-1,k}(\beta).
$$
\end{corollary}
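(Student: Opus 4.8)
The natural route is to compute $\theta_p\circ\theta_q(x)$ by brute force and then feed the result into \propref{prop:lacle}. First I would unwind the left-hand side. Since $\theta_q(x)=\ad_x^q(\beta)$ and $\theta_p$ is a derivation with $\theta_p(\beta)=0$ and $\theta_p(x)=\ad_x^p(\beta)$ (the degree of $\theta_p$ being $-1$ but $|x|=0$, no sign intervenes), a short induction on $q$ based on the rule $\theta_p([x,y])=[\ad_x^p(\beta),y]+\ad_x(\theta_p(y))$ gives
$$
\theta_p\circ\theta_q(x)=\sum_{i=0}^{q-1}\ad_x^i\bigl([\ad_x^p(\beta),\ad_x^{q-1-i}(\beta)]\bigr).
$$
Each inner bracket equals $\ad_{\ad_x^p(\beta)}\circ\ad_x^{q-1-i}(\beta)$, which is exactly the shape handled by \propref{prop:lacle}.

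\textbf{Key steps.} Applying \propref{prop:lacle} with $\gamma=\beta$ rewrites the $i$-th bracket as a combination of the $\mu_{p+q-1-i,\,\inf(p,q-1-i)+k}(\beta)$ with coefficients $v_{|p-(q-1-i)|,k}$. The remaining operator $\ad_x^i$ is absorbed by the elementary identity $\ad_x^i(\mu_{n,k}(\beta))=\mu_{n+i,k}(\beta)$, so every summand becomes a multiple of some $\mu_{p+q-1,m}(\beta)$; this already proves the first assertion that the image lies in the span of the $\mu_{p+q-1,k}(\beta)$. It then remains to collect, for each fixed second index $k$, the total coefficient of $\mu_{p+q-1,k}(\beta)$.

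\textbf{The case split.} This collection is where the two formulas diverge, and it is pure (but delicate) bookkeeping. When $p\geq q$ one has $q-1-i\leq p$ for every $i$ in range, so $\inf(p,q-1-i)=q-1-i$ and $|p-(q-1-i)|=p-q+1+i$; reindexing by $i\mapsto q-1-i$ then turns the coefficient of $\mu_{p+q-1,k}(\beta)$ into $\sum_{i=0}^{q-1}v_{p-i,k-i}$. When $p<q$ the integer $q-1-i$ crosses the value $p$ as $i$ runs over its range, so both $\inf(p,q-1-i)$ and the absolute value change regime at $i=q-p-1$. Splitting the sum there, the block $0\leq i\leq q-p-1$ contributes $\sum_{i=0}^{q-p-1}v_{q-p-1-i,k-p}$ (here $\inf=p$), while the block $q-p\leq i\leq q-1$, after the shift $i\mapsto i-(q-p)+1$, contributes $\sum_{i=1}^{p}v_{i,k-p+i}$.

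\textbf{Main obstacle.} The genuinely error-prone part is this final reindexing: one must track simultaneously how the coefficient argument $|p-q+1+i|$, the $\mu$-index $\inf(p,q-1-i)+k$, and the summation bounds co-vary, and verify that the two shifted blocks reassemble into exactly the stated closed forms with no missing or duplicated boundary terms. I would lean on the vanishing conventions $v_{i,j}=0$ for $j<0$ or $j>[i/2]$, together with the recursion $v_{n,k}=v_{n-1,k}-v_{n-2,k-1}$, to confirm that the extreme indices $i=0$, $i=q-p-1$, and $i=q-1$ are handled consistently, and I would sanity-check small cases (e.g.\ $q=p$ and $q=p+1$) against both formulas before asserting the general identity.
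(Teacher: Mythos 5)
Your proposal is correct and follows essentially the same route as the paper: expand $\theta_p\circ\theta_q(x)$ via the derivation rule into $\sum_{i=0}^{q-1}\ad_x^i([\ad_x^p(\beta),\ad_x^{q-1-i}(\beta)])$, apply Proposition~\ref{prop:lacle} termwise, absorb $\ad_x^i$ into the first index of $\mu$, and reindex with the same case split at $i=q-p-1$. The paper's own proof is exactly this computation (it even leaves the final reindexing as "simple but tedious"), and your bookkeeping of the shifted indices matches its stated coefficients.
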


\proof Clearly $\theta_p\circ\theta_0\equiv 0$. So we assume $p\geq0$ and $q\geq1$.
\begin{eqnarray*}
\theta_p\circ\theta_q(x)&=&[\theta_p(x),\theta_{q-1}(x)]+[x,\theta_p\circ\theta_{q-1}(x)],\\&=&
\ad_{\ad^p_x(\beta)}\circ\ad^{q-1}_x(\beta)+\ad_x\circ \theta_p\circ\theta_{q-1}(x).
\end{eqnarray*}
Hence, with an iteration on the second term and \propref{prop:lacle},  one gets
\begin{eqnarray*}
\theta_p\circ\theta_q(x)&=&\sum_{i=0}^{q-1}\ad_x^{i}\circ \ad_{\ad_x^p(\beta)}\circ \ad_x^{q-i-1}(\beta)\\
&=&
\sum_{i=0}^{q-1}\ad_x^{i}\circ \sum_tv_{|p-q+i+1|,t}\,\mu_{p+q-i-1,\inf(p,q-i-1)+t}(\beta)\\
&=&
\sum_{i=0}^{q-1} \sum_tv_{|p-q+i+1|,t}\,\mu_{p+q-1,\inf(p,q-i-1)+t}(\beta).
\end{eqnarray*}
If $p\geq q$, this formula simplifies in
\begin{eqnarray*}
\theta_p\circ\theta_q(x)&=&
\sum_{i=0}^{q-1} \sum_tv_{p-q+i+1,t}\,\mu_{p+q-1,q-i-1+t}(\beta)\\
&=&
\sum_k\sum_{j=0}^{q-1}v_{p-j,k-j}\,\mu_{p+q-1,k}(\beta),
\end{eqnarray*}
with $k=q+t-i-1$ and $j=q-1-i$.
If $p<q$, we have to cut the formula in two parts
\begin{eqnarray*}
\theta_p\circ\theta_q(x)&=&
\sum_t\sum_{i=0}^{q-p-1}v_{q-p-1-i,t}\,\mu_{p+q-1,p+t}(\beta)\\&&\hskip 2cm
+\sum_t\sum_{i=q-p}^{q-1}v_{p-q+1+i,t}\,\mu_{p+q-1,q-i-1+t}(\beta).
\end{eqnarray*}
The result follows by a simple but tedious re-indexing, as we did in the first case.
\qed

\section{Proof of the Main Theorem}\label{sec:maintheorem}
There is no universally accepted  convention for the Bernoulli numbers. Here we choose the following one:
$$\frac{z}{e^z-1}=\sum_{k=0}^{\infty}\frac{B_k}{k!}\,z^k=1-\frac{z}{2}+\sum_{k=1}^{\infty}\frac{B_{2k}}{(2k)!}\,z^{2k}.$$
All the Bernoulli numbers are rational, with
$B_1=-(1/2)$, $B_{2k+1}=0$ if $k\geq 1$,
$$B_0=1,\;
B_2=\frac{1}{6},\;
B_4=-\frac{1}{30},\;
B_6=\frac{1}{42},\;
B_8=-\frac{1}{30},\;
B_{10}=\frac{5}{66},\;
B_{12}=-\frac{691}{2730}.$$
Bernoulli numbers verify several induction formulae. Amongst them we recall the Euler formula, i.e.,
$$-n\,B_n=\sum_{k=1}^n \left(\begin{array}{c}n\\
k\end{array}\right)B_k\,B_{n-k}+n\,B_{n-1}.$$
When $n$ is even with $n> 2$, the formula reduces to
\begin{equation}
-\frac{(n+1)\,B_n}{n!}=\sum_{k=2}^{n-2}\frac{B_k}{k!}\,\frac{B_{n-k}}{(n-k)!}.
\end{equation}

\begin{theorem}
The coefficients of the differential in \propref{prop:inductive} are given by the Bernouilli numbers, i.e.,
$$
\lambda_i=B_i,
$$
for all $i\geq0$.
\end{theorem}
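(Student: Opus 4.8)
The plan is to prove $\lambda_i=B_i$ by induction on $i$, by identifying the recursion that defines the $\lambda_i$ (forced by $\partial^2=0$) with the Euler formula for the Bernoulli numbers. Throughout I use that, on $\L(x,\beta)$, one has $\partial_0=\theta_0$ (since $\partial_0x=\beta$ and $\partial_0\beta=0$) and $\partial_k=\frac{\lambda_k}{k!}\,\theta_k$ for $k\ge 2$ (since $\partial_kx=\frac{\lambda_k}{k!}\ad_x^k(\beta)$ and $\partial_k\beta=0$).

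First I would pin down the low-order coefficients. Comparing the bracket-length $1$ and $2$ parts of $\partial x$ with the values $\partial_0x=\beta$ and $\partial_1x=\frac12\ad_x(b+a)$ of \propref{prop:inductive} gives $\lambda_0=1=B_0$ and $\lambda_1=-\frac12=B_1$. The bracket-length-$3$ part of $\partial^2x=0$ is $\partial_0\partial_2(x)+\partial_1^2(x)=0$; inserting $\theta_0(\ad_x^2\beta)=\frac32\ad_x[\beta,\beta]$ and the value $\partial_1^2x=-\frac18\ad_x[\beta,\beta]$ already computed yields $\lambda_2=\frac16=B_2$. For odd indices \propref{prop:inductive} gives $\lambda_{2k+1}=0=B_{2k+1}$ for $k\ge1$. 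It therefore remains to treat even $m\ge 4$.

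For such $m$, the heart of the argument is the bracket-length-$(m+1)$ component of $\partial^2x=0$, namely $\sum_{i+j=m}\partial_i\partial_j(x)=0$. Here I would use three facts. First, $\partial_m\partial_0(x)=\partial_m(\beta)=0$. Second, all mixed terms with an index equal to $1$ cancel, since for $k\ge 2$ the proof of \propref{prop:inductive} gives $\partial_1\partial_k=-\frac12\ad_{a+b}\circ\partial_k$ and $\partial_k\partial_1=\frac12\ad_{a+b}\circ\partial_k$, so that $\partial_1\partial_k+\partial_k\partial_1=0$ (for $m\ge4$ the only index-$1$ pairs are $(1,m-1)$ and $(m-1,1)$). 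Third, the surviving terms reduce by the restriction formulas above. The relation thus collapses to
$$\frac{\lambda_m}{m!}\,\theta_0(\ad_x^m\beta)+\sum_{\substack{i+j=m\\ i,j\ge 2}}\frac{\lambda_i\lambda_j}{i!\,j!}\,\theta_i\theta_j(x)=0.$$
Both kinds of terms lie in $\L^{m+1}(x,\beta)$ and expand in the basis $\{\mu_{m-1,k}(\beta)\}$ provided by \corref{cor:independance}, so I may compare coefficients. Expanding $\theta_0$ as a derivation gives $\theta_0(\ad_x^m\beta)=\sum_{r=0}^{m-1}\ad_x^r\ad_\beta\ad_x^{m-1-r}(\beta)$, and \propref{prop:lacle} with $p=0$ turns each summand into $\sum_k v_{m-1-r,k}\,\mu_{m-1,k}(\beta)$; the terms $\theta_i\theta_j(x)$ are read off \corref{teta}. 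I would then extract a single scalar identity by taking the coefficient of $\mu_{m-1,0}(\beta)$. The recursion $v_{s,0}=v_{s-1,0}$ gives $v_{0,0}=1$ and $v_{s,0}=\frac12$ for $s\ge1$, so the $\mu_{m-1,0}$-coefficient of $\theta_0(\ad_x^m\beta)$ is $\sum_{s=0}^{m-1}v_{s,0}=\frac{m+1}{2}$, while \corref{teta} makes the $\mu_{m-1,0}$-coefficient of each $\theta_i\theta_j(x)$ with $i,j\ge2$ equal to $\frac12$. After multiplying by $2$, the equation becomes
$$-\frac{(m+1)\lambda_m}{m!}=\sum_{k=2}^{m-2}\frac{\lambda_k}{k!}\,\frac{\lambda_{m-k}}{(m-k)!},$$
which is precisely the reduced form of the Euler formula recalled above. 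Its right-hand side involves only $\lambda_k$ with $2\le k\le m-2$, so the inductive hypothesis $\lambda_k=B_k$ identifies it with the Bernoulli side, forcing $\lambda_m=B_m$.

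The step I expect to be the main obstacle is the bookkeeping in the previous paragraph: organizing the bracket-length-$(m+1)$ part of $\partial^2x$, justifying the cancellation of every mixed $\partial_1\partial_k$ pair, and—most delicately—computing the two $\mu_{m-1,0}$-coefficients correctly, which requires both the graded derivation expansion of $\theta_0$ and the exact values $v_{s,0}$ extracted from \propref{prop:lacle} and \corref{teta}. The coefficients of $\mu_{m-1,k}$ for $k\ge1$ play no role in the proof itself, but, once $\lambda_i=B_i$ is known, they yield the additional Bernoulli identities advertised in the introduction.
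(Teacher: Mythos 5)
Your proposal is correct and follows essentially the same route as the paper: both extract the coefficient of $\mu_{n-1,0}(\beta)$ from the bracket-length-$(n+1)$ component of $\partial^2x=0$, use the values $v_{0,0}=1$ and $v_{s,0}=\tfrac12$ to obtain the coefficients $\tfrac{n+1}{2}$ and $\tfrac12$, and identify the resulting recursion with the reduced Euler formula. The only difference is that you spell out explicitly the cancellation of the $\partial_1\partial_k+\partial_k\partial_1$ terms, which the paper absorbs into the phrase ``by construction we have $\partial_0\circ\partial_n+\sum_{i=2}^{n-2}\partial_i\circ\partial_{n-i}=0$.''
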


\proof
We check easily that $\lambda_0=B_0=1$ and $\lambda_1=B_1=-\frac{1}{2}$. Moreover, the construction guarantees that $\lambda_{2k+1}=B_{2k+1}=0$ for $k\geq1$.
Recall that within the proof of Proposition 1, we have shown that $\partial_1^2(x)=-\frac{1}{8}ad_x[\beta,\beta]$. A simple computation shows that $\partial_0([x,[x,\beta]])=\frac{3}{2}ad_x[\beta,\beta]$. That implies that we can choose $\lambda_2=B_2=\frac{1}{6}$.

Now let us assume $n>2$ and even. Observe from Corollary \ref{teta} that, for $p,q\geq2$ such that $p+q=n$, the rational coefficient of $\mu_{n-1,0}(\beta)$ in the expression of  $\theta_p\circ\theta_q(x)$ is given by
$$
v_{p,0}=\frac{1}{2}.
$$
Hence, on one hand, the coefficient of $\mu_{n-1,0}(\beta)$ in the equation
$$
\sum_{k=2}^{n-2}\partial_k\circ \partial_{n-k}(x)\,=\,\sum_{k=2}^{n-2}\frac{\lambda_k}{k!}\frac{\lambda_{n-k}}{(n-k)!}\theta_k\circ\theta_{n-k}(x)
$$
is
$$
\frac{1}{2}\sum_{k=2}^{n-2}\frac{\lambda_k}{k!}\frac{\lambda_{n-k}}{(n-k)!}.
$$
On the other hand, one has by Corollary \ref{teta} that the rational coefficient of $\mu_{n-1,0}(\beta)$ in the expression of  $\theta_0\circ\theta_n(x)$ is
$$
1+\sum_{k=1}^{n-1}v_{k,0}=1+\frac{n-1}{2}=\frac{n+1}{2}.
$$
Since by construction we have
$$
\partial_0\circ \partial_n\,+\,\sum_{i=2}^{n-2}\partial_i\circ \partial_{n-i}=0,
$$
the following relation must be satisfied, i.e.,
$$
\sum_{k=2}^{n-2}\frac{\lambda_k}{k!}\frac{\lambda_{n-k}}{(n-k)!}=-(n+1)\frac{\lambda_n}{n!},
$$
which is no other than the Euler equation characterizing the Bernouilli numbers. The result follows.

\qed

\section{Other Euler type relations between Bernouilli numbers}\label{sec:bernoullirelations}

The proof generates other relations amongst the Bernouilli numbers. Indeed, when $n$ is even, we can deduce such relations by projecting
$$
\partial_0\circ \partial_n\,+\,\sum_{i=2}^{n-2}\partial_i\circ \partial_{n-i}=0,
$$
onto any $\mu_{n-1,k}(\beta)$.

\begin{proposition}
For any fixed $k$, we have
\begin{eqnarray*}
-\frac{B_n}{n!}\left(\sum_{l=0}^{n-1}v_{l,k}\right)&=&
\sum_{\begin{array}{c}2\leq i<\frac{n}{2}\\k<i\end{array}}\frac{B_i}{i!}\frac{B_{n-i}}{(n-i)!}\left(\sum_{l=0}^iv_{l,k-i+l}\right)\\
&&\hskip 1cm +\sum_{\begin{array}{c}2\leq i<\frac{n}{2}\\k\geq i\end{array}}\frac{B_i}{i!}\frac{B_{n-i}}{(n-i)!}\left(\sum_{l=0}^{n-2i-1}v_{l,k-i}\right)\\
&&\hskip 1cm +\sum_{i=\frac{n}{2}}^{n-2}\frac{B_i}{i!}\frac{B_{n-i}}{(n-i)!}\left(\sum_{l=0}^{n-i-1}v_{i-l,k-l}\right).
\end{eqnarray*}
\end{proposition}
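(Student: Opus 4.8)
The plan is to obtain these identities by projecting the structural relation
$$\partial_0\circ\partial_n+\sum_{i=2}^{n-2}\partial_i\circ\partial_{n-i}=0,$$
which holds for even $n>2$ exactly as established in the proof of the preceding theorem (the terms $\partial_n\circ\partial_0$ and $\partial_1\circ\partial_{n-1}+\partial_{n-1}\circ\partial_1$ vanishing on $x$), onto the individual elements $\mu_{n-1,k}(\beta)$. By the preceding theorem the structure coefficients are Bernoulli numbers, so on $\L(x,\beta)$ we have $\partial_0=\theta_0$ and $\partial_i=\frac{B_i}{i!}\theta_i$ for $i\geq2$. Evaluating the relation at $x$ thus gives
$$\frac{B_n}{n!}\,\theta_0\circ\theta_n(x)+\sum_{i=2}^{n-2}\frac{B_i}{i!}\frac{B_{n-i}}{(n-i)!}\,\theta_i\circ\theta_{n-i}(x)=0.$$
Because the $\mu_{n-1,k}(\beta)$ are linearly independent by \corref{cor:independance}, the coefficient of each $\mu_{n-1,k}(\beta)$ must vanish separately, and this vanishing is the claimed identity once every $\theta_p\circ\theta_q(x)$ is expanded via \corref{teta}.

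First I would isolate the left-hand side. Applying the $p<q$ case of \corref{teta} with $p=0$ and $q=n$ (its second inner sum being empty) shows that the coefficient of $\mu_{n-1,k}(\beta)$ in $\theta_0\circ\theta_n(x)$ equals $\sum_{l=0}^{n-1}v_{l,k}$; multiplying by $B_n/n!$ and moving it across the equality produces the left member $-\frac{B_n}{n!}\big(\sum_{l=0}^{n-1}v_{l,k}\big)$. I would then split the remaining sum according to $i\geq n/2$ or $2\leq i<n/2$. For $i\geq n/2$ one has $p=i\geq q=n-i$, so the $p\geq q$ case of \corref{teta} contributes $\sum_{l=0}^{n-i-1}v_{i-l,k-l}$, which is precisely the third sum of the statement. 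For $2\leq i<n/2$ the $p<q$ case contributes the two inner sums $\sum_{l=0}^{n-2i-1}v_{l,k-i}$ and $\sum_{l=1}^{i}v_{l,k-i+l}$.

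It then remains to recognize that the dichotomy $k<i$ versus $k\geq i$ in the statement only records which of these last two inner sums is nonzero. When $k<i$ the index $k-i$ is negative, so $v_{l,k-i}=0$ for all $l$ and the first inner sum disappears; since also $v_{0,k-i}=0$, the survivor may be rewritten as $\sum_{l=0}^{i}v_{l,k-i+l}$, matching the first sum. When $k\geq i$ and $l\geq1$ the second index satisfies $k-i+l\geq l>[l/2]$, so $v_{l,k-i+l}=0$ and the second inner sum disappears, leaving $\sum_{l=0}^{n-2i-1}v_{l,k-i}$, matching the second sum. Collecting the three surviving contributions and setting the total coefficient of $\mu_{n-1,k}(\beta)$ to zero yields the proposition.

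The argument is essentially index bookkeeping: the genuine content is already packaged in \corref{teta} and in the linear independence of \corref{cor:independance}. The only points needing care are the two vanishing observations above, both immediate from the support condition $v_{i,j}=0$ for $j<0$ or $j>[i/2]$, and keeping the summation ranges consistent through the re-indexings. I do not expect any essential obstacle beyond this bookkeeping.
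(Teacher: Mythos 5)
Your proposal is correct and follows essentially the same route as the paper: project the relation $\partial_0\circ\partial_n+\sum_{i=2}^{n-2}\partial_i\circ\partial_{n-i}=0$ evaluated at $x$ onto each $\mu_{n-1,k}(\beta)$ and read off the coefficients from \corref{teta}, with the case split $k<i$ versus $k\geq i$ arising exactly as you describe from the support condition on the $v_{i,j}$. Your write-up is in fact slightly more explicit than the paper's, since you spell out the appeal to \corref{cor:independance} and the two vanishing observations that the paper leaves implicit.
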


\begin{proof}
We project
$
\partial_0\circ \partial_n\,+\,\sum_{i=2}^{n-2}\partial_i\circ \partial_{n-i}=0,
$
onto $\mu_{n-1,k}(\beta)$.
On one hand, the coefficient of $\mu_{n-1,k}(\beta)$ in the expression of $\partial_0\circ \partial_n(x)$ is
$$
\frac{B_n}{n!}\left(\sum_{l=0}^{n-1}v_{l,k}\right).
$$
While on the other hand, when $p+q=n$ and $p<q$, the coefficient of $\mu_{n-1,k}(\beta)$ in the expression of $\partial_p\circ \partial_q(x)$ is given by
$$
\frac{B_p}{p!}\frac{B_q}{q!}\left\{\begin{array}{cc}\displaystyle
\left(\sum_{l=0}^pv_{l,k-p+l}\right),&k<p\\
\displaystyle\left(\sum_{l=0}^{q-p-1}v_{l,k-p}\right),&k\geq p
\end{array}\right.,
$$
and when $p\geq q$ it is given by
$$
\frac{B_p}{p!}\frac{B_q}{q!}\left(\sum_{l=0}^{q-1}v_{p-l,k-l}\right).
$$
\end{proof}

\begin{remark}
The previous relations cannot be directly reduced to Euler equation. For instance, when $n=8$ and $k=2$, one gets the relation
$$
\frac{9}{2}\frac{B_2}{2!}\frac{B_6}{6!}=-15\frac{B_8}{8!}
$$
which differs from the Euler equation simply by the fact that the coefficient of $\displaystyle\frac{B_4^2}{(4!)^2}$ is zero. If one considers the case $n=10$ and $k=2$, we get the relation
$$
\frac{5}{2}\frac{B_4}{4!}\frac{B_6}{6!}+10\frac{B_2}{2!}\frac{B_8}{8!}=-\frac{77}{2}\frac{B_{10}}{10!},
$$
in which no terms are missing but still differs from Euler's relation because the coefficients on the left hand side are not equal.
\end{remark}

Finally, we observe that the numbers $v_{n,k}$ can be explicitely determined.

\begin{proposition}
The sequence $v_{n,k}$ satisfy the following properties:
$$
2\,v_{n,k}=(-1)^k\left(\!\!\left(
\begin{array}{c}n-k\\
k
\end{array}\right)+
\left(
\begin{array}{c}n-k-1\\k-1\end{array}\right)\!\!\right),
\text{ and }\; \sum_{k=0}^nv_{n+k,k}=0.
$$
\end{proposition}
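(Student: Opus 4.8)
The plan is to prove both assertions from a single closed formula for $v_{n,k}$: once that formula is in hand, the symmetric binomial expression is one Pascal step away and the alternating sum collapses by the binomial theorem. I would obtain the closed formula by generating functions rather than by a direct induction, because the generating function automatically accounts for the one exceptional initial value.

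First I would form the two-variable generating function $F(t,s)=\sum_{n,k\geq 0}v_{n,k}\,t^n s^k$. Multiplying the recurrence $v_{n,k}=v_{n-1,k}-v_{n-2,k-1}$ (valid for $n\geq 2$ and all $k$) by $t^n s^k$, summing, and separating off the seeds $v_{0,0}=1$ and $v_{1,0}=1/2$, turns the recurrence into the functional equation
$$
F-1-\tfrac{t}{2}=t(F-1)-t^2 s\,F,
$$
whose solution is
$$
F(t,s)=\frac{2-t}{2\,(1-t+s\,t^2)}.
$$
Next I would read off the coefficients: expanding $\frac{1}{1-t+st^2}=\sum_{m\geq 0}t^m(1-st)^m$ and then $(1-st)^m$ by the binomial theorem gives $[t^n s^k](1-t+st^2)^{-1}=(-1)^k\binom{n-k}{k}$, so that after multiplication by $\frac{2-t}{2}$,
$$
v_{n,k}=(-1)^k\Bigl(\binom{n-k}{k}-\tfrac12\binom{n-1-k}{k}\Bigr).
$$
A single use of Pascal's rule, $\binom{n-k}{k}-\binom{n-1-k}{k}=\binom{n-1-k}{k-1}$, rewrites this as the symmetric form $2v_{n,k}=(-1)^k\bigl(\binom{n-k}{k}+\binom{n-k-1}{k-1}\bigr)$ of the statement. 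An elementary alternative is induction on $n$: verify the formula directly at $n=1,2$, so that the anomalous seed $v_{0,0}=1$ is used only once in computing $v_{2,1}$, and then propagate it for $n\geq 3$, where the recurrence reaches only indices at which the formula already holds and the inductive step is again Pascal's rule.

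For the sum I would substitute $n\mapsto n+k$ in the non-symmetric form, which simplifies to $v_{n+k,k}=(-1)^k\bigl(\binom{n}{k}-\tfrac12\binom{n-1}{k}\bigr)$. Summing over $k$ and recognising two alternating binomial sums,
$$
\sum_{k=0}^{n}v_{n+k,k}=\sum_{k=0}^{n}(-1)^k\binom{n}{k}-\tfrac12\sum_{k=0}^{n}(-1)^k\binom{n-1}{k}=(1-1)^n-\tfrac12(1-1)^{n-1}=0,
$$
the final equality holding for $n\geq 2$ by the binomial theorem.

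The only delicate point, and hence the step I expect to need the most care, is the bookkeeping of binomial conventions at the boundary, under the rule $\binom{m}{j}=0$ whenever $j<0$, $m<0$, or $j>m$. The symmetric formula fails precisely at $(n,k)=(0,0)$, where $v_{0,0}=1$ while the right-hand side evaluates to $1/2$; this is exactly where the Pascal step $\binom{n-k}{k}=\binom{n-1-k}{k}+\binom{n-1-k}{k-1}$ degenerates (its top index is $0$), and it is the same defect that makes the alternating sum nonzero for $n\in\{0,1\}$. The generating-function approach sidesteps a separate low-degree discussion because the numerator $\tfrac{2-t}{2}$ already encodes the correct value $v_{0,0}=1$.
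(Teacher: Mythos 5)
Your proof is correct, and it takes a genuinely different route from the paper. The paper works with the partial sums $\sigma_{n,k}=\sum_{i=0}^{n}v_{i,k}$: telescoping the recurrence gives $v_{n,k}=-\sigma_{n-2,k-1}$ and $\sigma_{n,k}=-\sum_{i=0}^{n-2}\sigma_{i,k-1}$, and an induction on $k$ identifies $2\vert\sigma_{n,k}\vert$ with a combination of the iterated-sum functions $f_k$, whose closed form $f_{k+1}(n)=\binom{n+k+1}{k+2}$ is quoted from a classical reference; the alternating-sum identity is then dispatched with an appeal to ``basic properties of binomial coefficients.'' Your generating-function argument replaces all of this with the single rational function $F(t,s)=\frac{2-t}{2(1-t+st^2)}$, from which the closed form $v_{n,k}=(-1)^k\bigl(\binom{n-k}{k}-\frac12\binom{n-1-k}{k}\bigr)$ and both identities follow mechanically (one Pascal step, one application of $(1-1)^m=0$); I checked the functional equation, the coefficient extraction, and the resulting values against the paper's table, and they all agree. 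What your approach buys is self-containedness and an honest treatment of the boundary: you correctly observe that, under the standard convention $\binom{m}{j}=0$ for $j<0$, the displayed formula for $2v_{n,k}$ fails at $(n,k)=(0,0)$ and the vanishing of $\sum_{k=0}^{n}v_{n+k,k}$ fails for $n\in\{0,1\}$ (indeed $v_{1,0}+v_{2,1}=-\frac12$) --- caveats the paper's own proof silently passes over. What the paper's route buys is a direct link to the classical figurate-number identities it cites, but at the cost of an unproven inductive claim for $2\vert\sigma_{n,k}\vert$ and an unexplained final step.
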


\begin{proof}
Let $f_0(n)=n$. We set
$$
f_{k+1}(n)=\sum_{i=1}^nf_{k}(n).$$
Recall from  \cite[Page 134]{MR1504010} that
\begin{equation}\label{equa:occagne}f_{k+1}(n)=
\left(
\begin{array}{c}n+k-1\\
k
\end{array}\right)
f_0(1)+\cdots +
\left(
\begin{array}{c}k\\
k
\end{array}\right)
f_0(n)=
\left(
\begin{array}{c}n+k+1\\
k+2
\end{array}\right).
\end{equation}
In the above expression of $v_{n,k}$, the sign is clear thus we have only to study the absolute value.
We introduce $\sigma_{n,k}=\sum_{i=0}^nv_{i,k}$. If we add, from 0 to $n$, the defining relation
$v_{n,k}=v_{n-1,k}-v_{n-2,k-1}$, we get
\begin{equation}\label{equa:uneautre}v_{n,k}=-\sum_{i=0}^{n-2}v_{i,k-1}=-\sigma_{n-2,k-1},
\end{equation}
and, by adding these relations from 0 to $k$, we get
\begin{equation}\label{equa:sigma}\sigma_{n,k}=-\sum_{i=0}^{n-2}\sigma_{i,k-1}.
\end{equation}
We observe that $2|\sigma_{n,0}|=n+2=f_0(n)+2$ and
\begin{eqnarray*}
2\,|\sigma_{n,1}|&=&\sum_{i=0}^{n-2}|\sigma_{i,0}|
=\sum_{i=0}^{n-2}(f_0(i)+2)\\
&=&f_1(n-2)+2\,f_0(n-1).
\end{eqnarray*}
More generally, an induction on k  using formula~(\ref{equa:sigma}) gives
$$2\,|\sigma_{n,k}|=f_k(n-2k)+2\,f_{k-1}(n-2k+1).$$
Formulae (\ref{equa:occagne}), (\ref{equa:uneautre})  and  basic properties of binomial coefficients give the result.

\end{proof}
For convenience, we supply the first values of the $v_{n,k}$.

\medskip\centerline{\large
\begin{tabular}{|c||r|r|r|r|}
\hline
$n$&$v_{n,0}$&
$v_{n,1}$&
$v_{n,2}$&$v_{n,3}$\\
\hline\hline
0&1&&&\\\hline
1&1/2&&&\\\hline
2&1/2&-1&&\\\hline
3&1/2&-3/2&&\\\hline
4&1/2&-2&1&\\\hline
5&1/2&-5/2&5/2&\\\hline
6&1/2&-3&9/2&-1\\\hline
7&1/2&-7/2&7&-7/2\\
\hline
\end{tabular}}

\bibliographystyle{amsplain}
\bibliography{BernoulliCMH}

\end{document}